\newtheorem{thm}{Theorem}
\newtheorem{cor}[thm]{Corollary}
\newtheorem{defi}[thm]{Definition}
\newtheorem{nota}[thm]{Notation}
\newcommand\be{\begin{equation}}
\newcommand\ee{\end{equation}} 
\def\bdefi{\begin{defi}\rm}
\def\edefi{\end{defi}}
\def\bnota{\begin{nota}\rm}
\def\enota{\end{nota}}
\def\ZFC{\textup{\textsf{ZFC}}}
\def\({\textup{(}}
\def\){\textup{)}}
\def\bye{\end{document}}
\def\N{{\mathbb  N}}
\def\Q{{\mathbb  Q}}
\def\R{{\mathbb  R}}
\def\SS{\textup{\textsf{S}}}
\def\di{\rightarrow}
\def\asa{\leftrightarrow}
\def\osc{\textup{\textsf{osc}}}
\def\eps{\varepsilon}
\begin{document}
\title{The non-normal abyss in Kleene's computability theory\thanks{This research was supported by the \emph{Deutsche Forschungsgemeinschaft} (DFG) (grant nr.\ SA3418/1-1) and the \emph{Klaus Tschira Boost Fund} (grant nr.\ GSO/KT 43).}}
%
%
\author{Sam Sanders\inst{1}}
\authorrunning{S.\ Sanders}
%
\institute{Department of Philosophy II, RUB Bochum, Germany \\
\email{sasander@me.com} \\
\url{https://sasander.wixsite.com/academic}}

\setcounter{secnumdepth}{3}
\setcounter{tocdepth}{3}

\maketitle              
\begin{abstract}
Kleene's computability theory based on his S1-S9 computation schemes constitutes a model for \emph{computing with objects of any finite type} and extends Turing's `machine model' which formalises \emph{computing with real numbers}.  A fundamental distinction in Kleene's framework is between \emph{normal} and \emph{non-normal} functionals where the former compute the associated \emph{Kleene quantifier} $\exists^{n}$ and the latter do not.  Historically, the focus was on normal functionals, but recently new non-normal functionals have been studied, based on well-known theorems like the \emph{uncountability of the reals}. 
These new non-normal functionals are fundamentally different from historical examples like Tait's fan functional: the latter is computable from $\exists^{2}$ while the former are only computable in $\exists^{3}$.  
While there is a great divide separating $\exists^{2}$ and $\exists^{3}$, we identify certain closely related non-normal functionals that fall on different sides of this abyss.  
Our examples are based on mainstream mathematical notions, like \emph{quasi-continuity}, \emph{Baire classes}, and \emph{semi-continuity}.  
\end{abstract}
\section{Introduction}

\subsection{Motivation and overview}\label{intro}
Computability theory is a discipline in the intersection of theoretical computer science and mathematical logic where the fundamental question is as follows:
\begin{center}
\emph{given two mathematical objects $X $ and $ Y$, does $X$ compute $Y$ {in principle}?}
\end{center} 
In case $X $ and $Y$ are real numbers, Turing's famous `machine' model (\cite{tur37}) is the standard approach to this question, i.e.\ `computation' is interpreted in the sense of Turing machines.  
To formalise computation involving (total) abstract objects, like functions on the real numbers or well-orderings of the reals, Kleene introduced his S1-S9 computation schemes in \cites{kleeneS1S9}.
Dag Normann and the author have recently introduced (\cite{dagsamXIII}) a version of the lambda calculus involving fixed point operators that exactly captures S1-S9 and accommodates partial objects. 
Henceforth, any reference to computability is to be understood in Kleene's framework and (if relevant) the extension from \cite{dagsamXIII}.

\smallskip

A fundamental distinction in Kleene's framework is between \emph{normal} and \emph{non-normal} functionals where the former compute the associated \emph{Kleene quantifier} $\exists^{n}$ and the latter do not (see Section \ref{kelim}).  Historically, the focus was on normal functionals in that only few examples of \emph{natural} non-normal functionals were even known.  The first such example was Tait's \emph{fan functional}, which computes a modulus of uniform continuity on input a continuous function on $2^{\N}$ (\cite{dagtait}).  

\smallskip

Recently, Dag Normann and the author have identified \emph{new} non-normal functionals based on mainstream theorems like e.g.\ the \emph{Heine-Borel theorem}, the \emph{Jordan decomposition theorem}, and the \emph{uncountability of $\R$}  (\cites{dagsamV,dagsamVII, dagsamIX, dagsamXII, dagsamX,dagsamXIII}).  These non-normal functionals are \emph{very different} as follows: Tait's fan functional is computable in $\exists^{2}$, making it rather tame; by contrast the following non-normal operation is not computable in any $\SS_{k}^{2}$, where the latter decides $\Pi_{k}^{1}$-formulas.
\be\label{ting}
\text{\emph{Given $Y:[0,1]\di \N$, find $x, y\in \R $ such that $x\ne_{\R} y$ and $Y(x)=_{\N}Y(y)$.}}
\ee
Clearly, this operation witnesses the basic fact there is no injection from the unit interval to the naturals.  
The operation in \eqref{ting} \emph{can} be performed by $\exists^{3}$, which follows from some of the many proofs that $\R$ is uncountable.
Essentially all the non-normal functionals studied in \cites{dagsamV,dagsamVII, dagsamIX, dagsamXII, dagsamX,dagsamXIII} compute the operation in \eqref{ting}, or some equally hard variation.

\smallskip

In light of the previous, there are two classes of non-normal functionals: those computable in $\exists^{2}$, like Tait's fan functional, and those computable \textbf{only} from $\exists^{3}$, like the operation in \eqref{ting}.  Given the difference in computational power between $\exists^{2}$ and $\exists^{3}$, there would seem to be a great divide between these two classes.
In this paper, we identify certain \emph{closely related} non-normal functionals that fall on different sides of this abyss.  In particular, we obtain the following results.  
\begin{itemize}
\item Basic operations (finding a point of continuity or the supremum) on \emph{quasi-continuous} functions can be done using $\exists^{2}$; the same operations on the closely related \emph{cliquish} functions are only computable in $\exists^{3}$ (Section \ref{diff}).
\item Finding the supremum of \emph{Baire 2} functions requires $\exists^{3}$; the same operation is computable in $\SS^{2}$ for \emph{effectively} Baire 2 functions (Section \ref{SC2}).

\item Basic operations (finding a point of continuity or the supremum) on \emph{semi-continuous} functions require $\exists^{3}$, even if we assume an oscillation function (Def.\ \ref{oscf}); the same operations are computable in $\exists^{2}$ if we assume a `modulus of semi-continuity' (Section \ref{SC}).
\end{itemize}
Finally, we briefly sketch Kleene's framework in Section \ref{prelim}.  Required axioms and definitions are introduced in Sections \ref{lll} and \ref{cdef}.

\subsection{Preliminaries and definitions}\label{kelim}
We briefly introduce Kleene's \emph{higher-order computability theory} in Section~\ref{prelim}.
We introduce some essential axioms (Section~\ref{lll}) and definitions (Section~\ref{cdef}).  A full introduction may be found in e.g.\ \cite{dagsamX}*{\S2} or \cite{longmann}.
Since Kleene's computability theory borrows heavily from type theory, we shall often use common notations from the latter; for instance, the natural numbers are type $0$ objects, denoted $n^{0}$ or $n\in \N$.  
Similarly, elements of Baire space are type $1$ objects, denoted $f\in \N^{\N}$ or $f^{1}$.  Mappings from Baire space $\N^{\N}$ to $\N$ are denoted $Y:\N^{\N}\di \N$ or $Y^{2}$. 
An overview of this kind of notations can be found in e.g.\ \cite{longmann, dagsamXIII}. 

\subsubsection{Kleene's computability theory}\label{prelim}
Our main results are in computability theory and we make our notion of `computability' precise as follows.  
\begin{enumerate}
\item[(I)] We adopt $\ZFC$, i.e.\ Zermelo-Fraenkel set theory with the Axiom of Choice, as the official metatheory for all results, unless explicitly stated otherwise.
\item[(II)] We adopt Kleene's notion of \emph{higher-order computation} as given by his nine clauses S1-S9 (see \cite{longmann}*{Ch.\ 5} or \cite{kleeneS1S9}) as our official notion of `computable' involving total objects.
\end{enumerate}
We mention that S1-S8 are rather basic and merely introduce a kind of higher-order primitive recursion with higher-order parameters. 
The real power comes from S9, which essentially hard-codes the \emph{recursion theorem} for S1-S9-computability in an ad hoc way.  
By contrast, the recursion theorem for Turing machines is derived from first principles in \cite{zweer}.

\medskip

On a historical note, it is part of the folklore of computability theory that many have tried (and failed) to formulate models of computation for objects of all finite type and in which one derives the recursion theorem in a natural way.  For this reason, Kleene ultimately introduced S1-S9, which 
were initially criticised for their aforementioned ad hoc nature, but eventually received general acceptance.  
Now, Dag Normann and the author have introduced a new computational model based on the lambda calculus in \cite{dagsamXIII} with the following properties:
\begin{itemize}
\item S1-S8 is included while the `ad hoc' scheme S9 is replaced by more natural (least) fixed point operators,
\item the new model exactly captures S1-S9 computability for total objects,
\item the new model accommodates `computing with partial objects',
\item the new model is more modular than S1-S9 in that sub-models are readily obtained by leaving out certain fixed point operators.
\end{itemize}
We refer to \cites{longmann, dagsamXIII} for a thorough overview of higher-order computability theory.
We do mention the distinction between `normal' and `non-normal' functionals  based on the following definition from \cite{longmann}*{\S5.4}. 
We only make use of $\exists^{n}$ for $n=2,3$, as defined in Section \ref{lll}.
\bdefi\label{norma}
For $n\geq 2$, a functional of type $n$ is called \emph{normal} if it computes Kleene's quantifier $\exists^{n}$ following S1-S9, and \emph{non-normal} otherwise.  
\edefi
\noindent
It is a historical fact that higher-order computability theory, based on Kleene's S1-S9 schemes, has focused primarily on the world of \emph{normal} functionals; this opinion can be found \cite{longmann}*{\S5.4}.  
Nonetheless, we have previously studied the computational properties of new \emph{non-normal} functionals, namely those that compute the objects claimed to exist by:
\begin{itemize}
\item covering theorems due to Heine-Borel, Vitali, and Lindel\"of (\cites{dagsamV}),
\item the Baire category theorem (\cite{dagsamVII}),
\item local-global principles like \emph{Pincherle's theorem} (\cite{dagsamV}),
\item weak fragments of the Axiom of (countable) Choice (\cite{dagsamIX}),
\item the Jordan decomposition theorem and related results (\cites{dagsamXII, dagsamXIII}),
\item the uncountability of $\R$ (\cites{dagsamX, dagsamXI}).
\end{itemize}
Finally, the first example of a non-computable non-normal functional, Tait's fan functional (\cite{dagtait}), is rather tame: it is computable in $\exists^{2}$. 
By contrast, the functionals based on the previous list, including the operation \eqref{ting} from Section~\ref{intro}, are computable in $\exists^{3}$ but not computable in any $\SS_{k}^{2}$, where the later decides $\Pi_{k}^{1}$-formulas (see Section \ref{lll} for details).

\subsubsection{Some comprehension functionals}\label{lll}
In Turing-style computability theory, computational hardness is measured in terms of where the oracle set fits in the well-known comprehension hierarchy.  
For this reason, we introduce some axioms and functionals related to \emph{higher-order comprehension} in this section.
We are mostly dealing with \emph{conventional} comprehension here, i.e.\ only parameters over $\N$ and $\N^{\N}$ are allowed in formula classes like $\Pi_{k}^{1}$ and $\Sigma_{k}^{1}$.  

\medskip

First of all, the functional $\varphi^{2}$, also called \emph{Kleene's quantifier $\exists^{2}$}, as in $(\exists^{2})$ is clearly discontinuous at $f=11\dots$; in fact, $\exists^{2}$ is (computationally) equivalent to the existence of $F:\R\di\R$ such that $F(x)=1$ if $x>_{\R}0$, and $0$ otherwise via Grilliot's trick (see \cite{kohlenbach2}*{\S3}).
\be\label{muk}\tag{$\exists^{2}$}
(\exists \varphi^{2}\leq_{2}1)(\forall f^{1})\big[(\exists n)(f(n)=0) \asa \varphi(f)=0    \big]. 
\ee
Related to $(\exists^{2})$, the functional $\mu^{2}$ in $(\mu^{2})$ is called \emph{Feferman's $\mu$} (\cite{avi2}).
\begin{align}\label{mu}\tag{$\mu^{2}$}
(\exists \mu^{2})(\forall f^{1})\big(\big[ (\exists n)(f(n)=0) \di [f(\mu(f))=0&\wedge (\forall i<\mu(f))(f(i)\ne 0) \big]\\
& \wedge [ (\forall n)(f(n)\ne0)\di   \mu(f)=0] \big). \notag
\end{align}
We have $(\exists^{2})\asa (\mu^{2})$ over Kohlenbach's base theory (\cite{kohlenbach2}), while $\exists^{2}$ and $\mu^{2}$ are also computationally equivalent.  
Hilbert and Bernays formalise considerable swaths of mathematics using only $\mu^{2}$ in \cite{hillebilly2}*{Supplement IV}.

\medskip
\noindent
Secondly, the functional $\SS^{2}$ in $(\SS^{2})$ is called \emph{the Suslin functional} (\cite{kohlenbach2}).
\be\tag{$\SS^{2}$}
(\exists\SS^{2}\leq_{2}1)(\forall f^{1})\big[  (\exists g^{1})(\forall n^{0})(f(\overline{g}n)=0)\asa \SS(f)=0  \big].
\ee
By definition, the Suslin functional $\SS^{2}$ can decide whether a $\Sigma_{1}^{1}$-formula as in the left-hand side of $(\SS^{2})$ is true or false.   
We similarly define the functional $\SS_{k}^{2}$ which decides the truth or falsity of $\Sigma_{k}^{1}$-formulas.
%
We note that the Feferman-Sieg operators $\nu_{n}$ from \cite{boekskeopendoen}*{p.\ 129} are essentially $\SS_{n}^{2}$ strengthened to return a witness (if existant) to the $\Sigma_{n}^{1}$-formula at hand.  

\medskip

\noindent
Thirdly, the following functional $E^{3}$ clearly computes $\exists^{2}$ and $\SS_{k}^{2}$ for any $k\in \N$:
\be\tag{$\exists^{3}$}
(\exists E^{3}\leq_{3}1)(\forall Y^{2})\big[  (\exists f^{1})(Y(f)=0)\asa E(Y)=0  \big].
\ee
The functional from $(\exists^{3})$ is also called \emph{Kleene's quantifier $\exists^{3}$}, and we use the same -by now obvious- convention for other functionals.  
Hilbert and Bernays introduce a functional $\nu^{3}$ in \cite{hillebilly2}*{Supplement IV}, and the latter is essentially $\exists^{3}$ which also provides a witness like $\nu_{k}$ does.

\medskip

In conclusion, the operation \eqref{ting} from Section \ref{intro} is computable in $\exists^{3}$ but not in any $\SS_{k}^{2}$, as established in \cite{dagsamXI}.
Many non-normal functionals exhibit the same `computational hardness' and we merely view this as support for the development of a separate scale for classifying non-normal functionals.    

\subsubsection{Some definitions}\label{cdef}
We introduce some definitions needed in the below, mostly stemming from mainstream mathematics.
We note that subsets of $\R$ are given by their characteristic functions (Definition \ref{char}), where the latter are common in measure and probability theory.

\medskip
\noindent
First of all, we make use the usual definition of (open) set, where $B(x, r)$ is the open ball with radius $r>0$ centred at $x\in \R$.
\bdefi[Set]\label{char}~
\begin{itemize}
\item Subsets $A\subset \R$ are given by its characteristic function $F_{A}:\R\di \{0,1\}$, i.e.\ we write $x\in A$ for $ F_{A}(x)=1$ for all $x\in \R$.
\item A subset $O\subset \R$ is \emph{open} in case $x\in O$ implies that there is $k\in \N$ such that $B(x, \frac{1}{2^{k}})\subset O$.
\item A subset $C\subset \R$ is \emph{closed} if the complement $\R\setminus C$ is open. 
\end{itemize}
\edefi
\noindent
No computational data/additional representation is assumed in the previous definition.  
As established in \cites{dagsamXII, dagsamXIII}, one readily comes across closed sets in basic real analysis (Fourier series) that come with no additional representation. 

\smallskip

Secondly, the following sets are often crucial in proofs in real analysis. 
\bdefi
The sets $C_{f}$ and $D_{f}$ respectively gather the points where $f:\R\di \R$ is continuous and discontinuous.
\edefi
One problem with $C_{f}, D_{f}$ is that the definition of continuity involves quantifiers over $\R$.  
In general, deciding whether a given $\R\di \R$-function is continuous at a given real, is as hard as $\exists^{3}$ from Section \ref{lll}.
For these reasons, the sets $C_{f}, D_{f}$ do exist in general, but are not computable in e.g.\ $\exists^{2}$.  We show that for quasi-continuous and semi-continuous functions, these sets are definable in $\exists^{2}$. 

\smallskip

Thirdly, to define $C_{f}$ using $\exists^{2}$, one can also (additionally) assume the existence of the oscillation function $\osc_{f}:\R\di \R$ as in Def.\ \ref{oscf}.  Indeed, the continuity of $f$ as $x\in \R$ is then equivalent to the \emph{arithmetical} formula $\osc_{f}(x)=_{\R}0$. 
\bdefi[Oscillation function]\label{oscf}
For any $f:\R\di \R$, the associated \emph{oscillation functions} are defined as follows: $\osc_{f}([a,b]):= \sup _{{x\in [a,b]}}f(x)-\inf _{{x\in [a,b]}}f(x)$ and $\osc_{f}(x):=\lim _{k \di \infty }\osc_{f}(B(x, \frac{1}{2^{k}}) ).$
\edefi
We note that Riemann and Hankel already considered the notion of oscillation in the context of Riemann integration (\cites{hankelwoot, rieal}).

\section{Quasi-continuity and related notions}\label{diff}
We study the notion of \emph{quasi-continuity} and the closely related concept of \emph{cliquishness}, as in Definition \ref{klop}. 
As discussed below, the latter is essentially the closure of the former under sums.  
Nonetheless, basic properties concerning quasi-continuity give rise to functionals computable in $\exists^{2}$ while the same functionals generalised to cliquish functions are not computable in any $\SS_{k}^{2}$ by Theorem \ref{plonkook2}.

\smallskip

First of all, Def.\ \ref{klop} has some historical background: Baire has shown that separately continuous $\R^{2}\di \R$ are \emph{quasi-continuous} in one variable;  he mentions in \cite{beren2}*{p.\ 95} that the latter notion (without naming it) was suggested by Volterra.  
\bdefi\label{klop} For $f:[0,1]\di \R$, we have the following definitions:
\begin{itemize}
\item $f$ is \emph{quasi-continuous} at $x_{0}\in [0, 1]$ if for $ \epsilon > 0$ and any open neighbourhood $U$ of $x_{0}$, 
there is open ${\emptyset\ne G\subset U}$ with $(\forall x\in G) (|f(x_{0})-f(x)|<\eps)$.
\item $f$ is \emph{cliquish} at $x_{0}\in [0, 1]$ if for $ \epsilon > 0$ and any open neighbourhood $U$ of $x_{0}$, 
there is a non-empty open ${ G\subset U}$ with $(\forall x, y\in G) (|f(x)-f(y)|<\eps)$.
\end{itemize}
\edefi
These notions have nice technical and conceptual properties, as follows.
\begin{itemize}
\item The class of cliquish functions is exactly the class of sums of quasi-continuous functions (\cite{bors, quasibor2, malin}). In particular, cliquish functions are closed under sums while quasi-continuous ones are not.
\item The pointwise limit (if it exists) of quasi-continuous functions, is always cliquish (\cite{holausco}*{Cor.\ 2.5.2}).
\item The set $C_{f}$ is dense in $\R$ if and only if $f:\R\di \R$ is cliquish (see \cites{bors, dobo}).
\end{itemize}
Moreover, quasi-continuous functions can be quite `wild': if $\mathfrak{c}$ is the cardinality of $\R$, there are $2^{\mathfrak{c}}$ non-measurable quasi-continuous $[0,1]\di \R$-functions and $2^{\mathfrak{c}}$ measurable  quasi-continuous $[0,1]\di [0,1]$-functions (see \cite{holaseg}).  

\smallskip

Secondly, we show that $\exists^{2}$ suffices to witness basic properties of quasi-continuous functions. 
Hence, the associated functionals fall in the same class as Tait's {fan functional}.   We call a set `RM-open' if it is given via an RM-code (see \cite{simpson2}*{II.5.6}), i.e.\ a sequence of rational open balls.   
\begin{thm}\label{plonkook}
For quasi-continuous $f:[0,1]\di \R$, we have the following: 
\begin{itemize}
\item the set $C_{f}$ is definable using $\exists^{2}$ and the latter computes some $x\in C_{f}$,
\item there is a sequence $(O_{n})_{n\in \N}$ of RM-open sets, definable in $\exists^{2}$, such that $C_{f}=\cap_{n\in \N}O_{n}$,
\item the oscillation function $\osc_{f}:[0,1]\di \R$ is computable in $\exists^{2}$. 
\item the supremum $\sup_{x\in [p,q]}f(x)$ is computable in $\exists^{2}$ for any $p, q \in \Q\cap [0,1]$. 
\end{itemize}
\end{thm}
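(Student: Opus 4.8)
The plan is to reduce every item to the single observation that, for quasi-continuous $f$, suprema and infima over intervals are already attained in the limit over the \emph{rationals}, thereby replacing quantification over $\R$ (which is genuinely of type two and would require $\exists^{3}$) by quantification over $\Q$ (which is arithmetical). Once this is achieved, all the displayed objects are defined by arithmetical formulas in the type-one data $\langle f(r): r\in \Q\rangle$, and I will use the standard fact that, relative to $\exists^{2}$, every arithmetical predicate with parameters of type $0$ and $1$ is decidable (apply $\exists^{2}$ to a characteristic function that was itself computed using $\exists^{2}$, and iterate through the arithmetical hierarchy).

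First I would prove the density lemma: for quasi-continuous $f$ and any open interval $I\subseteq [0,1]$, one has $\sup_{x\in I}f(x)=\sup_{r\in \Q\cap I}f(r)$ and dually for the infimum. Indeed, given $x\in I$ and $\eps>0$, quasi-continuity at $x$ with neighbourhood $U=I$ yields a non-empty open $G\subseteq I$ on which $|f(x)-f(y)|<\eps$; since $G$ contains a rational $r$, we get $f(r)>f(x)-\eps$, and taking the supremum over $x\in I$ gives the claim. This immediately yields the last item: for $p,q\in\Q$ the value $\sup_{x\in[p,q]}f(x)=\sup_{r\in\Q\cap[p,q]}f(r)$ is the supremum of an $\exists^{2}$-computable sequence of reals, which $\exists^{2}$ computes (deciding $\sup>t$ for rational $t$ is $\Sigma^{0}_{1}$ in the sequence); unboundedness, if it occurs, is likewise detected by $\exists^{2}$.

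Next I would treat the oscillation function. Writing $\osc_{f}([c,d])=\sup_{r\in\Q\cap(c,d)}f(r)-\inf_{r\in\Q\cap(c,d)}f(r)$ for rational $c,d$ (computable in $\exists^{2}$ by the previous paragraph) and using $\osc_{f}(x)=\inf\{\osc_{f}([c,d]): c,d\in\Q,\ c<x<d\}$, the predicates ``$\osc_{f}(x)<t$'' and ``$\osc_{f}(x)>t$'' (for rational $t$) are arithmetical in the data $\langle f(r):r\in\Q\rangle$ and the rational name of $x$, hence decidable by $\exists^{2}$; this produces a fast-converging Cauchy name for $\osc_{f}(x)$, giving the third item. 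The first item's definability then follows since $x\in C_{f}\asa \osc_{f}(x)=_{\R}0$ is arithmetical, so $\exists^{2}$ computes the characteristic function of $C_{f}$. For the second item I would set $O_{n}:=\bigcup\{(c,d): c,d\in\Q,\ \osc_{f}((c,d))<\tfrac{1}{2^{n}}\}$; one checks $O_{n}=\{x:\osc_{f}(x)<\tfrac{1}{2^{n}}\}$ and hence $C_{f}=\bigcap_{n}O_{n}$, while the defining family of rational balls is $\exists^{2}$-decidable, yielding the required RM-codes.

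The remaining and most delicate point is computing an actual $x\in C_{f}$. The enabling sublemma is again quasi-continuity: every rational interval $[a,b]$ contains a rational subinterval $[c,d]\subset(a,b)$ with $\osc_{f}([c,d])<\eps$ --- choose a rational $x_{0}\in(a,b)$, apply quasi-continuity at $x_{0}$ with $\eps/2$ to obtain $G$, and take any rational $[c,d]\subset G$, so that the triangle inequality bounds the oscillation by $\eps$. I would then build a nested sequence of rational intervals $[a_{n},b_{n}]$ with $[a_{n+1},b_{n+1}]\subset(a_{n},b_{n})$, $b_{n}-a_{n}\to 0$, and $\osc_{f}([a_{n},b_{n}])<\tfrac{1}{2^{n}}$, each step being an $\exists^{2}$-decidable search over rational subintervals that is guaranteed to terminate by the sublemma. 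The common limit $x^{*}=\lim_{n}a_{n}$ is $\exists^{2}$-computable and satisfies $\osc_{f}(x^{*})=0$, i.e.\ $x^{*}\in C_{f}$. I expect this last construction to be the main obstacle: one must arrange the searches so that termination is provable from quasi-continuity and so that the interval endpoints, lengths, and oscillation bounds are all certified arithmetically, while the genuine conceptual content --- and the only place quasi-continuity is essential --- lies entirely in the density lemma, since for arbitrary $f$ the same operations are $\exists^{3}$-hard.
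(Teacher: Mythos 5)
Your proof is correct and follows essentially the same route as the paper: both rest on the single observation that quasi-continuity lets one replace quantification over $\R$ by quantification over $\Q$, so that the oscillation/supremum predicates become arithmetical and hence $\exists^{2}$-decidable, and both locate a point of $C_{f}$ by an effective Baire category argument applied to the resulting RM-coded open sets $O_{n}$. The only cosmetic difference is that you inline the nested-interval proof of the effective Baire category theorem (explicitly verifying density of the $O_{n}$ via your sublemma) where the paper simply cites \cite{simpson2}*{II.5.8}.
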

\begin{proof}
Fix quasi-continuous $f:[0,1]\di \R$ and use $\exists^{2}$ to define $x\in O_{m}$ in case 
\be\label{obvio}\textstyle
(\exists N_{0}\in \N)(\forall q, r\in  B(x, \frac{1}{2^{N_{0}}})\cap \Q)( |f(q)-f(r)|\leq \frac{1}{2^{m}}  ).  
\ee
By (the definition of) quasi-continuity, the formula \eqref{obvio} is equivalent to 
\be\label{ferengi}\textstyle
(\exists N_{1}\in \N)(\forall w, z\in  B(x, \frac{1}{2^{N_{1}}}))( |f(w)-f(z)|\leq \frac{1}{2^{m}}  ),
\ee
where we note that the equivalence remains valid if $N_{0}=N_{1}$ in \eqref{obvio} and \eqref{ferengi}.  
Now apply $\mu^{2}$ to \eqref{obvio} to obtain $G:([0,1]\times \N)\di \N$ such that for all $x\in [0,1]$ and $m\in \N$, we have
\[\textstyle
x\in O_{m}\di (\forall w, z\in  B(x, \frac{1}{2^{G(x, m)}}))( |f(w)-f(z)|\leq \frac{1}{2^{m}}  ).
\]
Hence, $x\in O_{m}\di B(x, \frac{1}{2^{G(x, m)}})\subset O_{m}$, witnessing that $O_{m}$ is open.  
Clearly, we also have $O_{m}=\cup_{q\in \Q}B(q, \frac{1}{2^{G(q, m)}})$, i.e.\ we also have an RM-representation of $O_{m}$.  
To find a point $x\in C_{f}= \cap_{m\in \N}O_{m}$, the proof of the Baire category theorem for RM-representations is effective by \cite{simpson2}*{II.5.8}, and the first two items are done.  

\smallskip

For the final two items, note that $\sup_{x\in [p,q]}f(x)$ equals $\sup_{x\in [p,q]\cap \Q}f(x)$ due to the definition of quasi-continuity. 
In particular, in the usual interval-halving procedure for finding the supremum, one can equivalently replace `$(\exists x\in [0,1])(f(x)>y)$' by `$(\exists q\in [0,1]\cap \Q)(f(q)>y)$' in light of the definition of quasi-continuity.  
The same holds for infima and the oscillation function $\osc_{f}:[0,1]\di \R$ is therefore also computable in $\exists^{2}$.
\qed
\end{proof}

\smallskip

Thirdly, despite their close connection and Theorem \ref{plonkook}, basic properties of cliquish functions give rise to functionals that are \emph{hard} to compute in terms of comprehension functionals by Theorem \ref{plonkook2}.  To this end, we need the following definition from \cite{dagsamXIII}, which also witnesses that the unit interval is uncountable. 
\bdefi
Any $\Phi:\big( (\R\di \{0,1\})\times (\R\di \N)  \big)\di \R  $ is called a \emph{Cantor realiser} in case $\Phi(A, Y)\not \in A$ for non-empty $A\subset [0,1]$ and $Y:[0,1]\di \N$ injective on $A$. 
\edefi
As shown in \cite{dagsamXII}, no Cantor realiser is computable in any $\SS_{k}^{2}$, even if we require a bijection (rather than an injection). 
We have the following result.
\begin{thm}\label{plonkook2}
The following functionals are not computable in any $\SS_{k}^{2}$:
\begin{itemize}
\item any functional $\Phi:(\R\di \R)\di \R$ such that for all cliquish $f:[0,1]\di [0,1]$, we have $\Phi(f)\in C_{f}$. 
\item  any functional $\Psi:(\R\di \R)\di (\R^{2}\di \R)$ such that for cliquish $f:[0,1]\di [0,1]$, we have $\Psi(f, p, q)=\sup_{x\in [p,q]}f(x)$ for $p, q\in [0,1]$. 
\item  any functional $\zeta:(\R\di \R)\di ( (\N\times \N)\di \Q^{2})$ such that for cliquish $f:[0,1]\di [0,1]$ and any $n, m\in \N$, $\zeta(f, m,n)$ is an open interval such that $C_{f}=\bigcap_{n\in \N}\big(\cup_{m\in \N}\zeta(f, m,n) \big)$.
\end{itemize}
In particular, each of these functionals computes a Cantor realiser \(given $\exists^{2}$\).  
\end{thm}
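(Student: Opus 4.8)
The plan is to prove the \emph{final sentence} — that each of $\Phi,\Psi,\zeta$ computes a Cantor realiser given $\exists^{2}$ — and then read off the non-computability in $\SS_{k}^{2}$ for free. Indeed, every $\SS_{k}^{2}$ computes $\exists^{2}$ (it even computes the Suslin functional $\SS^{2}$), so if one of $\Phi,\Psi,\zeta$ were computable in some $\SS_{k}^{2}$, then $\SS_{k}^{2}$ together with $\exists^{2}$ — i.e.\ $\SS_{k}^{2}$ itself — would compute a Cantor realiser, contradicting the result of \cite{dagsamXII} that no Cantor realiser is computable in any $\SS_{k}^{2}$. Hence everything reduces to the three reductions, and it suffices to fix a non-empty $A\subseteq[0,1]$ and $Y:[0,1]\di\N$ injective on $A$; note that injectivity forces $A$ to be countable.

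The common gadget is a single auxiliary function built from the given data: $f=f_{A,Y}:[0,1]\di[0,1]$, defined by $f(x):=F_{A}(x)\cdot 2^{-(Y(x)+1)}$, which is computable from $F_{A}, Y$ and $\exists^{2}$. The key properties, all routine to check, are: (i) $f$ is cliquish at every point, since for $\eps>0$ and a neighbourhood $U$ only finitely many $x\in A$ satisfy $f(x)\ge \eps$ (at most one per $Y$-value below the threshold, by injectivity of $Y$ on $A$), so one finds a non-empty open $G\subseteq U$ avoiding those finitely many points, on which $f$ has oscillation $<\eps$; (ii) $f$ is discontinuous at every $x_{0}\in A$, because $f(x_{0})>0$ while $f$ vanishes on the dense complement of $A$, so $A\subseteq D_{f}$ and $C_{f}\subseteq[0,1]\setminus A$; (iii) $C_{f}$ is dense, as $f$ is cliquish. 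Properties (ii)--(iii) are exactly what make any point extracted from $C_{f}$ a witness ``outside $A$''.

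For the continuity functional $\Phi$ I am then done immediately: $\Phi(f)\in C_{f}\subseteq[0,1]\setminus A$, so $(A,Y)\mapsto \Phi(f_{A,Y})$ is a Cantor realiser. For $\zeta$, the output yields RM-open sets $O_{n}:=\bigcup_{m}\zeta(f,m,n)$ with $C_{f}=\bigcap_{n}O_{n}$; since $C_{f}\subseteq O_{n}$ is dense, each $O_{n}$ is dense open, so the effective Baire category theorem for RM-codes (\cite{simpson2}*{II.5.8}, as already used in the proof of Theorem~\ref{plonkook}) produces, using $\exists^{2}$, a point $x^{*}\in\bigcap_{n}O_{n}=C_{f}\subseteq[0,1]\setminus A$. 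For the supremum functional $\Psi$ I would run a bisection on the same fixed $f$: maintaining nested rational intervals $I_{n}$ of length $\to 0$, at each stage split $I_{n}$ into three equal closed pieces and use $\Psi$ to compute $\sup f$ on each; the current maximiser (the unique point of $A\cap I_{n}$ of least $Y$-value $m$) lies in at most two adjacent pieces, so at least one piece $I_{n+1}$ omits it and has $\sup_{I_{n+1}}f\le 2^{-(m+2)}<\sup_{I_{n}}f$ (by injectivity no other point of $A$ has $Y$-value $m$). This halves the supremum at each step, forcing $\sup_{I_{n}}f\to 0$; the limit $x^{*}=\bigcap_{n}I_{n}$ then has $f(x^{*})=0$, hence $x^{*}\notin A$ (if some $\sup_{I_{n}}f$ is already $0$, any point of $I_{n}$ works). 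Each construction uses only $\exists^{2}$ beyond the given functional.

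The genuinely new content, and the step I expect to be most delicate, is the supremum reduction for $\Psi$: one must justify that a subinterval of strictly smaller supremum always exists and can be located by finitely many $\Psi$-queries — this is precisely why I partition into \emph{three} rather than two pieces, to handle the maximiser sitting on a splitting point — and one must check the supremum decreases fast enough for the nested intervals to pin down a single limit with $f$-value $0$. The other subtle point is the verification of cliquishness in (i), which underlies all three cases; everything else is bookkeeping about reals computable from $\exists^{2}$.
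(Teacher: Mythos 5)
Your proposal is correct, and for the first and third items it coincides with the paper's proof: the same witness function $f(x)=F_{A}(x)\cdot 2^{-(Y(x)+1)}$, the same verification that $f$ is cliquish with $C_{f}\subseteq[0,1]\setminus A$, the observation that $\Phi(f)$ is immediately a point outside $A$, and the effective Baire category theorem of \cite{simpson2}*{II.5.8} applied to the RM-open sets coded by $\zeta$. Where you genuinely diverge is the second item. The paper uses $\Psi$ to \emph{locate the maximiser} $y_{0}$ of $f$ by bisection (comparing $\Psi(f,0,\tfrac12)$ with $\Psi(f,0,1)$, etc.), then iterates to obtain an enumeration of all of $A$, and finally invokes the effective uncountability argument \cite{simpson2}*{II.4.9} to diagonalise out of that enumeration. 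Your trisection instead drives the supremum to $0$ along a shrinking nested sequence of closed rational intervals and converges directly to a point with $f$-value $0$, hence outside $A$; the three-way split to dodge a maximiser sitting on a cut point is exactly the right device, and the halving estimate $\sup_{I_{n+1}}f\leq 2^{-(m+2)}$ via injectivity of $Y$ on $A$ is sound. Your route buys a more self-contained argument: it avoids the somewhat delicate step of enumerating $A$ (the paper's ``similarly obtain an enumeration of $A$'' glosses over the fact that later points of $A$ may accumulate at already-found ones) and needs no appeal to II.4.9. The paper's route buys slightly more information, namely that $\Psi$ actually yields an enumeration of $A$ itself, which is a stronger intermediate conclusion than just producing one point off $A$. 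Both arguments use only $\exists^{2}$ on top of the given functional, so the final reduction to the non-computability of Cantor realisers in any $\SS_{k}^{2}$ (from \cite{dagsamXII}) goes through identically.
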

\begin{proof}
Fix $A\subset [0,1]$ and $Y:[0,1]\di \N$ injective on $A$.  Now define the following function $f:[0,1]\di \R$, for any $x\in [0,1]$, as follows:
\be\label{penny}
f(x):=
\begin{cases}
\frac{1}{2^{Y(x)+1}} & \textup{in case $x\in A$}\\
0 & \textup{otherwise}
\end{cases}.
\ee
By definition, for any $\eps>0$, there are only finitely many $x\in A$ such that $f(x)>\eps$ for $i\leq k$.  
This readily implies that $f$ is \emph{cliquish} at any $x\in [0,1]$ and \emph{continuous} at any $y\not \in A$.  
Now let $\Phi$ be as in the first item and note that $\Phi(f)\in C_{f}$ implies that $\Phi(f)\not \in A$, as required for a Cantor realiser.  

\smallskip

For the second item, let $\Psi$ be as in the latter and consider $\Psi(f, 0,1)$, which has the form $\frac{1}{2^{n_{0}+1}}=f(y_{0})$ for some $y_{0}\in [0,1]$ and $n_{0}\in \N$. 
Now check whether $\Psi(f, 0, \frac{1}{2})= \Psi(f, 0, 1)$ to decide if $y_{0}\in [0,\frac12]$ or not.  Hence, we know the first bit of the binary representation of $y_{0}$.
Repeating this process, we can \emph{compute} $y_{0}$, and similarly obtain an enumeration of $A$.  With this enumeration, we can compute $z\not \in A$ following \cite{simpson2}*{II.4.9}, as required for a Cantor realiser. 

\smallskip

For the third item, to find a point $x\in C_{f}=\bigcap_{n\in \N}\big(\cup_{m\in \N}\zeta(f, m,n) \big)$, the proof of the Baire category theorem for RM-representations is effective by \cite{simpson2}*{II.5.8}, and the first item provides a Cantor realiser.  
\qed
\end{proof}

\section{The first and second Baire classes}\label{SC2}
We study the notion of \emph{Baire 1} function and the closely related concept of (effectively) \emph{Baire 2} function, as in Definition \ref{flung}. 
Nonetheless, basic properties of Baire 1 functions give rise to functionals computable in $\exists^{2}$ while the same functionals generalised to Baire 2 are not computable in any $\SS_{k}^{2}$.
Properties of \emph{effectively} Baire 2 functions are still computable by the Suslin functional $\SS^{2}$.

\smallskip

First of all, after introducing the Baire classes, Baire notes that Baire 2 functions can be \emph{represented} by repeated limits as in \eqref{kabel} (see \cite{beren2}*{p.\ 69}). 
Given $\exists^{2}$, effectively Baire 2 functions are essentially the representation of Baire~2 functions used in second-order arithmetic (\cite{basket2}).
\bdefi\label{flung} 
For $f:[0,1]\di \R$, we have the following definitions:
\begin{itemize}
\item $f$ is \emph{Baire $1$} if it is the pointwise limit of a sequence of continuous functions.
\item $f$ is \emph{Baire $2$} if it is the pointwise limit of a sequence of Baire 1 functions. 
\item $f$ is \emph{effectively Baire $2$} if there is a double sequence $(f_{n,m})_{n,m\in \N}$ of continuous functions on $[0,1]$ such that
\be\label{kabel}\textstyle
f(x)=_{\R}\lim_{n\di \infty }\lim_{m\di \infty}f_{n,m}(x) \textup{ for all $x\in [0,1]$}.
\ee

\end{itemize}
\edefi
Secondly, the following theorem -together with Theorem \ref{banks}- shows there is a great divide in terms of computability theoretic properties for Baire 2 functions and representations.  
Note that for effectively Baire 2 functions, we assume the associated (double) sequence is an input for the algorithm.  
\begin{thm}\label{poi}
For effectively Baire 2 $f:[0,1]\di [0,1]$, the supremum $\sup_{x\in [p,q]}f(x)$ is computable in $\SS^{2}$ for any $p, q \in \Q\cap [0,1]$. 
\end{thm}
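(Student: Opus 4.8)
The plan is to reduce the computation of $\sup_{x\in[p,q]}f(x)$ to finitely many decisions of $\Sigma_{1}^{1}$-formulas, each handled by $\SS^{2}$; recall that $\SS^{2}$ computes $\exists^{2}$ and decides $\Sigma_{1}^{1}$-formulas with number and type-one parameters. The crucial point is that, since $f$ is \emph{effectively} Baire $2$, the witnessing double sequence $(f_{n,m})_{n,m\in\N}$ of continuous functions is available as an input and the iterated limit in \eqref{kabel} is guaranteed to exist. This lets us unwind the statement $f(x)>y$ into a formula with only number quantifiers over a matrix that $\exists^{2}$ can evaluate, which is exactly what keeps the problem at the $\Sigma_{1}^{1}$ level rather than the $\exists^{3}$ level.

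Concretely, first I would observe that for a fixed real $x\in[0,1]$ and rational $y$, the existence of the limits $b_{n}(x):=\lim_{m}f_{n,m}(x)$ and $f(x)=\lim_{n}b_{n}(x)$ yields the equivalence
$$
f(x)>y \longleftrightarrow (\exists \delta, N)(\forall n\geq N)(\exists \eta, M)(\forall m\geq M)\big[\, f_{n,m}(x) > y+\delta+\eta \,\big],
$$
where $\delta,\eta$ range over positive rationals. The matrix $f_{n,m}(x)>y+\delta+\eta$ is decidable by $\exists^{2}$: it amounts to evaluating the continuous $f_{n,m}$ at the real $x$ via the given code and a $\Sigma_{1}^{0}$ comparison of reals. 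Hence the displayed right-hand side is arithmetical in $x$ and in the parameter $(f_{n,m})_{n,m\in\N}$, so it is decidable by $\exists^{2}$ uniformly in $x$. Consequently, for each rational $y$ the statement
$$
\sup_{x\in[p,q]}f(x) > y \longleftrightarrow (\exists x\in[p,q])\big[\, f(x)>y \,\big]
$$
has the form `$\exists$ real $x$' applied to an $\exists^{2}$-decidable matrix, i.e.\ it is $\Sigma_{1}^{1}$ in the available parameters, and so $\SS^{2}$ decides $\sup_{x\in[p,q]}f(x)>y$ for every rational $y$.

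Finally, from the ability to decide $\sup_{x\in[p,q]}f(x)>y$ for all rational $y$ I would compute the supremum as a real by the usual interval-halving procedure: starting from $[0,1]$ (as $f$ maps into $[0,1]$), repeatedly bisect and use one $\SS^{2}$-decision at the midpoint to retain the correct half, producing a Cauchy sequence of rationals with fixed modulus converging to $\sup_{x\in[p,q]}f(x)$. The main obstacle is the second step: casting `$(\exists x\in[p,q])[f(x)>y]$' into the precise normal form $(\exists g^{1})(\forall n^{0})(h(\overline{g}n)=0)$ that $\SS^{2}$ decides. This requires coding the real $x$ by a function $g$ ranging over reals in $[p,q]$ and, using $\exists^{2}$, contracting the arithmetical matrix $f(x)>y$ to a $\Pi_{1}^{0}$-predicate. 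Both steps are standard in higher-order reverse mathematics, but they are exactly where the hypothesis that the double sequence is given is essential: it is what turns the nested limits defining $f$ into an arithmetical condition rather than a genuinely $\exists^{3}$-level one, and thus places the supremum operation within reach of $\SS^{2}$.
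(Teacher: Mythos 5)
Your proposal is correct and follows essentially the same route as the paper's proof: unwind the iterated limit of \eqref{kabel} into an arithmetical formula with a rational margin, note that prefixing `$\exists x\in[p,q]$' yields a $\Sigma_{1}^{1}$-formula (after passing to codes for the continuous $f_{n,m}$ via $\exists^{2}$) which $\SS^{2}$ decides, and then compute the supremum by interval-halving. Your two-margin formulation of $f(x)>y$ and the explicit final bisection step are just slightly more detailed renderings of what the paper does.
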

\begin{proof}
Let $(f_{n,m})$ be a double sequence as in \eqref{kabel}.  By the definition of repeated limit, the formula $(\exists x\in [0,1])(f(x)>q)$ is equivalent to 
\[\label{tanker}\textstyle
(\exists y\in [0,1], l\in \N)(\exists N\in \N)(\forall n\geq N)(\exists M\in \N)(\forall m\geq M)(f_{n,m}(y)\geq q +\frac{1}{2^{l}} ),
\]
which is equivalent to a $\Sigma_{1}^{1}$-formula upon replacing $f_{n, m}$ by RM-codes codes for continuous functions.
Note that $\exists^{2}$ computes such codes (uniformly) by \cite{kohlenbach4}*{\S4} (for Baire space) and \cite{dagsamXIV}*{\S2.2} (for $\R$).  
In light of the above equivalence, $\SS^{2}$ can decide $(\exists x\in [0,1])(f(x)>q)$ and hence compute the required suprema.  
\qed
\end{proof}
By the results in \cite{dagsamXIV}*{\S2.3.1}, $\exists^{2}$ can compute the supremum of a bounded Baire~1 function.  
One could explore similar results for sub-classes.  

\smallskip

Thirdly, we have the following theorem.  Note that for Baire 2 functions, we assume the associated sequence of Baire 1 functions is an input for the algorithm.  
\begin{thm}\label{banks}
The following functionals are not computable in any $\SS_{k}^{2}$:
\begin{itemize}
\item  any functional $\Phi:(\R\di \R)\di (\R^{2}\di \R)$ such that for Baire 2 $f:[0,1]\di [0,1]$, we have $\Phi(f, p, q)=\sup_{x\in [p,q]}f(x)$ for $p, q\in [0,1]$. 
\item  any functional $\Psi:(\R\di \R)\di (\N^{2}\di (\R\di \R))$ such that for Baire 2 $f:[0,1]\di [0,1]$, the double sequence $(\Psi(f, n, m))_{n,m\in \N}$ satisfies \eqref{kabel}.  
\end{itemize}
In particular, each of these functionals computes a Cantor realiser \(given $\SS^{2}$\).  
\end{thm}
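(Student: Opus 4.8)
The plan is to reduce both items to the witness construction already used in the proof of Theorem~\ref{plonkook2}, exploiting the fact that the function \eqref{penny} is Baire~2 as well as cliquish. Fix a non-empty $A\subset [0,1]$ and $Y:[0,1]\di \N$ injective on $A$; since $Y$ is injective on $A$, the set $A$ is countable. Define $f:[0,1]\di[0,1]$ exactly as in \eqref{penny}, so that $f(x)=\frac{1}{2^{Y(x)+1}}\in(0,\tfrac12]$ for $x\in A$ and $f(x)=0$ otherwise.

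First I would verify that $f$ is Baire~2 and that a witnessing sequence of Baire~1 functions is computable from the data $F_{A},Y$ and $\exists^{2}$. To this end I set, for each $n\in\N$,
\[\textstyle
g_{n}(x):=\begin{cases}\frac{1}{2^{Y(x)+1}} & x\in A\ \wedge\ Y(x)\leq n\\[1mm] 0 & \textup{otherwise.}\end{cases}
\]
Because $Y$ is injective on $A$, each $g_{n}$ has at most $n+1$ non-zero values and is thus a finite sum of scaled indicators of single points; as the indicator of a point is a pointwise limit of tent functions, each $g_{n}$ is Baire~1. Moreover $g_{n}(x)\di f(x)$ for every $x$ (equality is reached once $n\geq Y(x)$ on $A$, and both sides vanish off $A$), so $f$ is Baire~2 with witnessing sequence $(g_{n})_{n\in\N}$, and the map $(n,x)\mapsto g_{n}(x)$ is computable from $F_{A},Y,\exists^{2}$ with no need to locate the points of $A$ beforehand.

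For the first item I would feed $f$ together with $(g_{n})_{n\in\N}$ to the supremum functional $\Phi$. Since $Y$ is injective, the value $\sup_{x\in[0,1]}f(x)$ is attained at a \emph{unique} point $a^{*}\in A$ (the one of least $Y$-value), so the interval-halving argument of Theorem~\ref{plonkook2} applies verbatim: comparing $\Phi(f,0,1)$ with $\Phi(f,0,\tfrac12)$, and so on, computes $a^{*}$, and iterating the procedure (locating successive maxima) yields an enumeration of $A$. From this enumeration one computes some $z\notin A$ by \cite{simpson2}*{II.4.9}, which is precisely the output required of a Cantor realiser; all steps beyond $\Phi$ use only $\exists^{2}$, which $\SS^{2}$ computes.

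For the second item I would chain through Theorem~\ref{poi}. Given $f$ as above, the functional $\Psi$ returns a double sequence $(\Psi(f,n,m))_{n,m}$ of continuous functions satisfying \eqref{kabel}, i.e.\ an \emph{effectively} Baire~2 representation of $f$; by Theorem~\ref{poi}, $\SS^{2}$ then computes $\sup_{x\in[p,q]}f(x)$ for rational $p,q$ from this representation, and composing with the binary-search extraction of the first item again produces a Cantor realiser. The main obstacle is the verification in the second paragraph — checking that $f$ is genuinely Baire~2 and, crucially, that its Baire~1 witnessing sequence is computable without first enumerating $A$ (which is exactly what we are trying to produce) — together with the bookkeeping that turns the sequence of attained maxima into a full enumeration of $A$. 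The hardness conclusion is then immediate, since no Cantor realiser is computable in any $\SS_{k}^{2}$ by \cite{dagsamXII}.
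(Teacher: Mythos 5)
Your proposal is correct and follows essentially the same route as the paper: the witnessing Baire~1 sequence you call $(g_{n})_{n\in\N}$ is exactly the sequence \eqref{penny2} used there, the first item is handled by the same binary-search extraction of an enumeration of $A$ from the supremum functional as in Theorem~\ref{plonkook2}, and the second item is reduced to the first via Theorem~\ref{poi} just as in the paper. Your extra remarks (uniqueness of the maximiser, Baire~1-ness of $g_{n}$ via tent functions) only flesh out steps the paper treats as immediate.
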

\begin{proof}
For the first item, $f$ as in \eqref{penny} is Baire 2.  Indeed, consider the following
\be\label{penny2}
f_{n}(x):=
\begin{cases}
\frac{1}{2^{Y(x)+1}} & \textup{in case $x\in A \wedge Y(x)\leq n$}\\
0 & \textup{otherwise}
\end{cases},
\ee
which has only got at most $n+1$ points of discontinuity, i.e.\ $f_{n}$ is definitely Baire~1. 
We trivially have $\lim_{n\di \infty}f_{n}(x)=f(x)$ for $x\in [0,1]$. 
For the second item, combine the results for the first item with Theorem \ref{poi}. 
\qed
\end{proof}

\section{Semi-continuity}\label{SC}
We study the notion of \emph{upper and lower semi-continuity} due to Baire (\cite{beren2}).  
Curiously, we \emph{can} define $C_{f}$ for a usco $f:[0,1]\di \R$ using $\exists^{2}$, but computing an $x\in C_{f}$ is \emph{not} possible via any $\SS_{k}^{2}$ (see Theorems \ref{timtam} and \ref{tam}), even assuming an oscillation function.  
Requiring a `modulus of semi-continuity' (see Def.\ \ref{klung}), $\exists^{2}$ can compute some $x\in C_{f}$ (Theorem \ref{tim}).  
However, while a modulus of \emph{continuity} is computable in $\exists^{2}$, a modulus of \emph{semi-continuity} is not computable in any $\SS_{k}^{2}$ by Corollary~\ref{corke}.  

\smallskip

\noindent
First of all, we use the following standard definitions.  
\bdefi[Semi-continuity]\label{klung} 
For $f:[0,1]\di \R$, we have the following:
\begin{itemize}
\item $f$ is \emph{upper semi-continuous} (usco) at $x_{0}\in [0,1]$ if for any $y>f(x_{0})$, there is $N\in \N$ such that for all $z\in B(x, \frac{1}{2^{N}})$, we have $f(z)<y$,
\item $f$ is \emph{lower semi-continuous} (lsco) at $x_{0}\in [0,1]$ if for any $y<f(x_{0})$, there is $N\in \N$ such that for all $z\in B(x, \frac{1}{2^{N}})$, we have $f(z)>y$,
\item a \emph{modulus of usco} for $f$ is any function $\Psi:[0,1]\di \R^{+}$ such that :
\[\textstyle
(\forall k\in \N) (\forall y\in B(x, \Psi(x,k)))( f(y)< f(x)+\frac{1}{2^{k}}   ).
\]  
We also refer to $\Psi$ as a `usco modulus'.  
\end{itemize}
\edefi
Secondly, we have the following theorem.  
\begin{thm}\label{timtam}
For usco $f:[0,1]\di \R$,  the set $C_{f}$ is definable using $\exists^{2}$.
\end{thm}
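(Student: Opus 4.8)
The plan is to reduce, for a globally usc $f$, membership in $C_{f}$ to a purely arithmetical condition in which every quantifier over $\R$ has been replaced by a quantifier over $\Q$; this mirrors the equivalence of \eqref{obvio} and \eqref{ferengi} exploited for quasi-continuous functions in Theorem \ref{plonkook}. First I would note that, since $f$ is usc \emph{everywhere}, the `upper half' of continuity is automatic: for each $x_{0}$ and each $k$ there is $N$ with $f(z)<f(x_{0})+\tfrac{1}{2^{k}}$ for all $z\in B(x_{0},\tfrac{1}{2^{N}})$. Hence $f$ is continuous at $x_{0}$ if and only if it is lower semi-continuous there, i.e.
\[\textstyle
(\forall k\in \N)(\exists N\in \N)(\forall z\in B(x_{0},\tfrac{1}{2^{N}}))\big(f(z)> f(x_{0})-\tfrac{1}{2^{k}}\big),
\]
which still contains the offending real quantifier $(\forall z\in B(x_{0},\tfrac{1}{2^{N}}))$.

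The key step is the lemma that for usc $f$ and any open ball $B$ one has $\inf_{z\in B}f(z)=\inf_{q\in B\cap \Q}f(q)$. The inequality `$\leq$' is trivial; for `$\geq$', if some real $z\in B$ had $f(z)$ strictly below $m:=\inf_{q\in B\cap \Q}f(q)$, then usc of $f$ at $z$ would yield a neighbourhood $V\subseteq B$ on which $f<\tfrac{f(z)+m}{2}<m$, and any rational in $V$ then violates the definition of $m$. Using this, the real quantifier above may be replaced by a rational one, so that $x_{0}\in C_{f}$ becomes equivalent to
\[\textstyle
(\forall k\in \N)(\exists N\in \N)(\forall q\in \Q)\big[\,q\in B(x_{0},\tfrac{1}{2^{N}})\di f(q)\geq f(x_{0})-\tfrac{1}{2^{k}}\,\big].
\]

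It remains to observe that this last formula is decidable in $\exists^{2}$. Only number quantifiers occur, with $f$ entering solely through the application terms $f(q)$ and $f(x_{0})$; evaluating $f$ at a given real is mere application, while comparing the two resulting reals is a $\Pi_{1}^{0}/\Sigma_{1}^{0}$ condition on their rational approximants, which $\exists^{2}$ (equivalently $\mu^{2}$) decides. Iterating over the $\forall\exists\forall$ prefix then yields the characteristic function of $C_{f}$ as computable in $\exists^{2}$, which is what `definable using $\exists^{2}$' means here.

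I expect the main obstacle to be the careful deployment of the rational-reduction lemma rather than its proof. One must match the strict inequality $f(z)>f(x_{0})-\tfrac{1}{2^{k}}$ against the infimum bound $\inf_{z\in B}f(z)\geq f(x_{0})-\tfrac{1}{2^{k}}$ and absorb the resulting strict/non-strict and single-dyadic-level discrepancies; this is handled cleanly by the outer quantifier $(\forall k)$, and I would make only this $\eps$-bookkeeping explicit, leaving the topological heart—the single use of upper semi-continuity—as the conceptual core. Crucially, the argument is genuinely one-sided: the dual identity $\sup_{z\in B}f(z)=\sup_{q\in B\cap\Q}f(q)$ \emph{fails} for usc $f$, which is consistent with the negative results of Theorems \ref{tam} and \ref{timtam} and explains why only the lower estimate can be made arithmetical.
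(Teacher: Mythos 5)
Your proof is correct and follows essentially the same route as the paper's: both reduce continuity of a usco $f$ at $x_{0}$ to lower semi-continuity there and then exploit upper semi-continuity to replace the real quantifier over a ball by a rational one (your infimum lemma $\inf_{z\in B}f(z)=\inf_{q\in B\cap\Q}f(q)$ is exactly the justification the paper leaves implicit in passing from \eqref{dumbel} to \eqref{dumbel1}), yielding an arithmetical definition of $C_{f}$ decidable by $\exists^{2}$. A minor slip only: your closing sentence should cite Theorem \ref{tam} and Corollary \ref{corke} as the negative results, not Theorem \ref{timtam} itself.
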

\begin{proof}
First of all, it is a matter of definitions to show the equivalence between `$g:\R\di \R$ is continuous at $x\in \R$' and `$g:\R\di \R$ is usco and lsco at $x\in \R$'.  
Then, for usco $f:[0,1]\di \R$, `$f$ is discontinuous at $x\in [0,1]$' is equivalent to 
\be\label{dumbel}\textstyle
(\exists l\in \N)(\forall k\in \N){(\exists y\in B(x, \frac{1}{2^{k}})}(f(y)\leq f(x)-\frac{1}{2^{l}} ),
\ee
which expresses that $f$ is not lsco at $x\in [0,1]$.  Now, \eqref{dumbel} is equivalent to 
\be\label{dumbel1}\textstyle
(\exists l\in \N)(\forall k\in \N)\underline{(\exists r\in B(x, \frac{1}{2^{k}})\cap \Q)}(f(r)\leq f(x)-\frac{1}{2^{l}} ),
\ee
where in particular the underlined quantifier in \eqref{dumbel1} has rational range due to $f$ being usco.  
Since \eqref{dumbel1} is arithmetical, $\exists^{2}$ allows us to define $D_{f}$ (and $C_{f}$).  
\qed
\end{proof}
Thirdly, we have the following theorem showing that while $C_{f}$ is definable using $\exists^{2}$, the latter cannot compute any $x\in C_{f}$ (and the same for any $\SS_{k}^{2}$), even if we assume an oscillation function (see Def.\ \ref{oscf}).  
\begin{thm}\label{tam}
Theorem \ref{plonkook2} remains correct if we replace `cliquish' by `usco' or `usco with an oscillation function'.
\end{thm}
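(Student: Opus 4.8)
The plan is to reuse the very counterexample from the proof of Theorem~\ref{plonkook2}, namely the function $f$ defined in \eqref{penny} from a set $A\subset[0,1]$ and a $Y:[0,1]\di\N$ injective on $A$. Recall that the injectivity of $Y$ on $A$ forces $A$ to be countable, so that $[0,1]\setminus A$ is dense and, as already noted there, $\{z:f(z)>\eps\}$ is finite for every $\eps>0$. The two new ingredients I need are that this same $f$ is in fact \emph{usco} and that its oscillation function is trivially available; once these are in place, the three reductions to a Cantor realiser are \emph{verbatim} those of Theorem~\ref{plonkook2}, and the conclusion follows since no Cantor realiser is computable in any $\SS_k^2$ by \cite{dagsamXII}.

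First I would verify that $f$ from \eqref{penny} is usco at every $x_{0}\in[0,1]$ in the sense of Def.~\ref{klung}. Fix $y>f(x_{0})$; as $y>0$, the set $\{z:f(z)\geq y\}$ is finite and, since $f(x_{0})<y$, does not contain $x_{0}$. Hence there is $N\in\N$ so large that $B(x_{0},\tfrac{1}{2^{N}})$ avoids these finitely many points, whence $f(z)<y$ for all $z$ in this ball, as required. This upgrades the `cliquish' hypothesis of Theorem~\ref{plonkook2} to `usco' at no cost, handling the first half of the statement.

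For the `usco with an oscillation function' half, I would establish the pointwise identity $\osc_{f}=f$. Indeed $f\geq 0$ and the zeros of $f$ are dense, so $\inf_{z\in B(x,1/2^{k})}f(z)=0$ for all $k$. On the other hand $\sup_{z\in B(x,1/2^{k})}f(z)\geq f(x)$ trivially, while for any $\eps>0$ the finitely many points with $f>f(x)+\eps$ are eventually excluded as the ball shrinks, giving $\sup_{z\in B(x,1/2^{k})}f(z)\leq f(x)+\eps$ for all large $k$. Letting $k\di\infty$ and then $\eps\di 0$ yields $\osc_{f}(x)=f(x)-0=f(x)$. Thus the oscillation function of this particular $f$ \emph{equals $f$ itself}, so it is available at no computational cost and supplying it as an extra input grants no extra power.

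With these facts the argument closes exactly as in Theorem~\ref{plonkook2}. Given $A$ and $Y$, the function $f$ from \eqref{penny} (together with $\osc_{f}=f$) is usco, and feeding it to any of the three functionals produces a Cantor realiser: the first and third items yield a point of $C_{f}=[0,1]\setminus A$ directly, via $\exists^{2}$ and the effective Baire category theorem \cite{simpson2}*{II.5.8}, whereas the second item yields, by the interval-halving argument, an enumeration of $A$ from which a point outside $A$ is computed following \cite{simpson2}*{II.4.9}. The only genuinely new point is the identity $\osc_{f}=f$, and this is precisely what makes the additional oscillation-function hypothesis vacuous for this counterexample; everything else is inherited from Theorem~\ref{plonkook2}.
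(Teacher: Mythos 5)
Your proposal is correct and follows essentially the same route as the paper's own proof: take $f$ from \eqref{penny}, observe it is usco because only finitely many points exceed any $\eps>0$, note $\osc_{f}=f$ so the oscillation hypothesis is vacuous, and rerun the reductions of Theorem \ref{plonkook2}. You merely supply the verification details that the paper leaves implicit.
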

\begin{proof}
The function $f$ from \eqref{penny} is usco, which follows from the observation that for any $\eps>0$, there are only finitely many $x\in A$ such that $f(x)>\eps$ for $i\leq k$.  
Now repeat the proof of Theorem \ref{plonkook2} for usco functions.  One readily proves that $f$ equals $\osc_{f}$, i.e.\ $f$ is its own oscillation function. 
\qed
\end{proof}
To our surprise, functions that are their own oscillation function are studied in the mathematical literature (\cite{kosten}).  Moreover, there is no contradiction between Theorems~\ref{poi} and~\ref{tam} as follows: 
while usco functions are Baire 1, Theorem \ref{tam} does not assume a Baire 1 (or effectively Baire 2) representation is given as an input, while of course Theorem \ref{poi} does.  

\smallskip

\noindent
Fourth, we now show that given a modulus of usco, we can find points of continuity of usco functions using $\exists^{2}$. 
\begin{thm}\label{tim}
For usco $f:[0,1]\di \R$ with a modulus $\Psi:[0,1]\di \R^{+}$, a real $x\in C_{f}$ can be computed by $\exists^{2}$. 
\end{thm}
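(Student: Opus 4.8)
The plan is to mirror the proof of Theorem~\ref{plonkook}: I would exhibit $C_{f}$ as a countable intersection of dense RM-open sets that are definable using $\exists^{2}$, and then invoke the effectivity of the Baire category theorem for RM-representations (\cite{simpson2}*{II.5.8}) to compute a point of the intersection. The modulus $\Psi$ is exactly the ingredient that makes the RM-open step go through, whereas Theorem~\ref{tam} shows it cannot be done for bare usco functions.

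First I would recall from the proof of Theorem~\ref{timtam} that, since $f$ is usco, continuity of $f$ at $x$ is equivalent to lower semi-continuity at $x$, and that the real quantifier in the semi-continuity condition may be replaced by a rational one as in \eqref{dumbel1}. Thus, for each $l\in \N$ I would set
\[\textstyle
O_{l}:=\{\, x\in[0,1] : (\exists k\in\N)(\forall r\in B(x,\tfrac{1}{2^{k}})\cap\Q)(f(r)>f(x)-\tfrac{1}{2^{l}})\,\},
\]
so that $x$ is lsco at $x$, hence (by usco) continuous at $x$, exactly when $x\in\bigcap_{l}O_{l}$; this gives $C_{f}=\bigcap_{l\in\N}O_{l}$. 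Each $O_{l}$ is arithmetical in $f$ and therefore definable using $\exists^{2}$, and each $O_{l}$ contains $C_{f}$, which is comeager because a usco function is Baire~$1$; hence every $O_{l}$ is dense.

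The crucial step is to upgrade each $O_{l}$ to a genuine open set carrying an $\exists^{2}$-computable RM-code, and here the modulus $\Psi$ enters. Applying $\mu^{2}$ to the defining formula I would compute, for each $x\in O_{l}$, the least witness $k=k(x,l)$. Setting $\delta:=\min(\tfrac{1}{2^{k+1}},\Psi(x,l))$ and taking $x'\in B(x,\delta)$, the usco modulus yields $f(x')<f(x)+\tfrac{1}{2^{l}}$, while every rational $r\in B(x',\tfrac{1}{2^{k+1}})$ lies in $B(x,\tfrac{1}{2^{k}})$ and hence satisfies $f(r)>f(x)-\tfrac{1}{2^{l}}>f(x')-\tfrac{1}{2^{l-1}}$; consequently $B(x,\delta)\subseteq O_{l-1}$. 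This shows $O_{l}\subseteq\operatorname{int}(O_{l-1})$, so the sets $\operatorname{int}(O_{l})$ are open, still satisfy $\bigcap_{l}\operatorname{int}(O_{l})=C_{f}$, remain dense, and admit codes $\bigcup_{q\in\Q}B(q,\rho_{q,l})$ with rational radii computable from $\Psi$ and $\mu^{2}$, i.e.\ bona fide RM-codes. With a sequence of dense RM-open sets whose intersection is $C_{f}$ in hand, the effective Baire category theorem (\cite{simpson2}*{II.5.8}) computes some $x\in C_{f}$, as desired. The main obstacle is precisely this RM-open step: without a modulus one cannot propagate membership in $O_{l}$ to a full rational ball in an $\exists^{2}$-computable way, which is exactly the failure underlying Theorem~\ref{tam}, and the uniform upper control provided by $\Psi$ is what repairs it.
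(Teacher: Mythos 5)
Your proof is correct, and at the strategic level it matches the paper's: both arguments use the modulus $\Psi$ to turn a countable family of dense sets whose intersection lies inside $C_{f}$ into \emph{effectively represented} open sets, and then invoke an effective Baire category theorem. The decomposition is genuinely different, though. The paper works with sets indexed by rationals, $O_{q}:=\{x\in[0,1]: f(x)<q\vee A(x,q)\}$, where $A(x,q)$ states that all rationals near $x$ have $f$-value $\geq q$; it shows $D_{f}\subseteq\bigcup_{q\in\Q}([0,1]\setminus O_{q})$, uses $\Psi$ (in the case $f(x)<q$) and $\mu^{2}$ (in the case $A(x,q)$) to compute, for each $x\in O_{q}$, a ball around $x$ contained in $O_{q}$ --- the `R2-representation' of open sets --- and then applies \cite{dagsamVII}*{Theorem 7.10} rather than \cite{simpson2}*{II.5.8}. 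Your sets $O_{l}$ instead measure the defect of lower semi-continuity relative to $f(x)$ itself, and rather than showing each set open you prove the level shift $O_{l}\subseteq\operatorname{int}(O_{l-1})$; this is a clean alternative use of $\Psi$ and keeps you entirely within the RM-code machinery already deployed for Theorem \ref{plonkook}, at the mild cost of losing one level of the index. One small point you should spell out: for $\bigcup_{q\in\Q}B(q,\rho_{q,l})$ to be a \emph{dense} RM-open set you need $O_{l}\cap\Q$ (not merely $O_{l}$) to be dense. This does hold --- any rational sufficiently close to a point of $C_{f}$ lands in $O_{l}$ by a two-step triangle inequality, and $C_{f}$ is dense --- but it is not automatic from $C_{f}\subseteq O_{l}$ alone, since $C_{f}$ may contain no rationals at all (Thomae-type usco functions). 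With that verification added, the argument goes through.
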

\begin{proof}
Fix usco $f:[0,1]\di \R$ with modulus $\Psi:[0,1]\di \R^{+}$ and note that for $x\in [0,1]$ and $q\in \Q$, we have by definition that:
\[\textstyle
 (\exists N\in \N)(\forall z\in B(x, \frac{1}{2^{N}}))( f(z)\geq  q  ) \asa  (\exists M\in \N)\underline{(\forall r\in B(x, \frac{1}{2^{M}})\cap \Q)}( f(r)\geq q  ),
\]
where we abbreviate the right-hand side (arithmetical) formula by $A(x, q)$.  We note that the above equivalence even goes through for $N=M$.  
Define $O_{q}:=\{ x\in [0,1]: f(x)< q \vee A(x, q)\}$ using $\exists^{2}$ and note that $D_{q}:= [0,1]\setminus O_{q}$ is closed and (by definition) nowhere dense.  

\smallskip

Next, we show that $D_{f}\subset \cup_{q\in \Q}D_{q}$.  
Indeed, in case $x_{0}\in D_{f}$, $f$ cannot be lsco at $x_{0}\in [0,1]$, i.e.\ we have 
\be\label{tingel}\textstyle
(\exists l\in \N)(\forall N\in \N)(\exists z\in B(x_{0}, \frac{1}{2^{N}}))(f(z)\leq f(x_{0})-\frac{1}{2^{l}}).
\ee
Let $l_{0}$ be as in \eqref{tingel} and consider $q_{0}\in\Q$ such that $f(x_{0})>q_{0}> f(x_{0})-\frac{1}{2^{l_{0}}}$. 
By definition, $f(x_{0})\geq q_{0}$ and $\neg A(x_{0}, q_{0})$, i.e.\ $x_{0}\in D_{q_{0}}$ as required.  

\smallskip

Finally, define $Y(x)$ as $\Psi(x, k_{0} )$ in case $k_{0}$ is the least $k\in \N$ with $f(x)+\frac{1}{2^{k}}  \leq q$ (if such exists), and zero otherwise. 
In case $x \in O_{q} \wedge f(x)<q$, then $B(x, Y(x))\subset O_{q}$.  In case $x\in O_{q} \wedge A(x, q)$, then $\mu^{2}$ can find $M_{0}$, the least $M\in \N$ as in $A(x, q)$, which is such that $B(x, \frac{1}{2^{M_{0}}})\subset O_{q}$.  Hence, in case $x\in O_{q}$, we can compute (using $\mu^{2}$) some ball around $x$ completely within $O_{q}$.   The latter kind of representation of open sets is called the \emph{R2-representation} in \cite{dagsamVII}.  
Now, the Baire category theorem implies that there exists $y\in \cup_{q\in \Q}O_{q}$, which satisfies $y\not \in D_{f}$ by the previous paragraph.  
By \cite{dagsamVII}*{Theorem 7.10}, $\exists^{2}$ can compute such $y\in \cup_{q\in \Q}O_{q}$, thanks to the R2-representation of open sets.
Essentially, the well-known constructive proof goes through (see e.g.\ \cite{bish1}*{p.\ 87}) and one uses the R2-representation to avoid the use of the (countable) Axiom of Choice.     
\qed
\end{proof}
The following corollary should be contrasted with the fact that a modulus of continuity for real functions is computable from $\exists^{2}$.  
\begin{cor}\label{corke}
The following functional is not computable in any $\SS_{k}^{2}$:
\begin{center}
any functional $\Phi:(\R\di \R)\di ((\R\times \N)\di \R)$ such that $\Phi(f)$ is a usco modulus for usco $f:[0,1]\di [0,1]$. 
\end{center}
\end{cor}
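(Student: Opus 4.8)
The plan is to derive a contradiction by composition, using the two immediately preceding results as a bridge. Suppose toward a contradiction that such a $\Phi$ were computable in $\SS_{k}^{2}$ for some fixed $k\in \N$. The guiding observation is that $\SS_{k}^{2}$ already computes $\exists^{2}$, so if it could \emph{additionally} manufacture a usco modulus on input a usco function, then Theorem~\ref{tim} would let us compute a point of continuity of an arbitrary usco function, in direct contradiction with Theorem~\ref{tam}. In other words, a usco modulus is precisely the extra data that lowers the ``find a point of continuity'' problem from the $\exists^{3}$ side of the abyss down to $\exists^{2}$, so cheap access to such a modulus would collapse the hardness established in Theorem~\ref{tam}.

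Concretely, I would first fix arbitrary $A\subset [0,1]$ and $Y:[0,1]\di \N$ injective on $A$, and form the associated function $f$ from \eqref{penny}. As recorded in the proof of Theorem~\ref{tam}, this $f$ maps into $[0,1]$ and is usco, so it lies in the domain of $\Phi$. Next I would apply the hypothetical $\Phi$ to obtain $\Psi:=\Phi(f)$, which by assumption is a genuine usco modulus for $f$; this step is $\SS_{k}^{2}$-computable by hypothesis. Now I am exactly in the situation of Theorem~\ref{tim}, namely a usco $f$ together with a usco modulus $\Psi$, whence $\exists^{2}$ computes some $x\in C_{f}$. Since $\SS_{k}^{2}$ subsumes $\exists^{2}$, the entire map sending the data $(A,Y)$ through $f$ and $(f,\Psi)$ to a point $x\in C_{f}$ is computable in $\SS_{k}^{2}$.

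Finally, exactly as in the proof of Theorem~\ref{plonkook2}, the point $x\in C_{f}$ must satisfy $x\not\in A$, because $f$ is discontinuous at every point of $A$ by construction; hence the composed map yields a Cantor realiser. But no Cantor realiser is computable in any $\SS_{k}^{2}$ (by \cite{dagsamXII}), which is the desired contradiction and completes the argument.

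I do not expect a genuine obstacle here, since the statement is a corollary assembled from Theorems~\ref{tim} and~\ref{tam}; the only points needing care are (i) verifying that the $f$ of \eqref{penny} really meets the \emph{exact} hypotheses of Theorem~\ref{tim}, i.e.\ that it is usco and that the object returned by $\Phi$ is a bona fide modulus, so that this output can legitimately be fed into Theorem~\ref{tim}, and (ii) making explicit that $\SS_{k}^{2}$ computes $\exists^{2}$, so that the composed computation stays within $\SS_{k}^{2}$ throughout. Both are routine once the reduction is set up, so the substance of the proof is the composition itself rather than any delicate estimate.
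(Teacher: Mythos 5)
Your proposal is correct and is exactly the paper's argument: the paper's proof of Corollary~\ref{corke} is the one-line instruction ``Combine Theorems~\ref{tam} and~\ref{tim}'', and your write-up simply unpacks that combination (a $\SS_{k}^{2}$-computable modulus functional plus the $\exists^{2}$-procedure of Theorem~\ref{tim} would yield a $\SS_{k}^{2}$-computable point-of-continuity functional for usco functions, hence a Cantor realiser, contradicting Theorem~\ref{tam}). The two points of care you flag are indeed the only ones, and both are already settled in the proofs of Theorems~\ref{tam} and~\ref{tim}.
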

\begin{proof}
Combine Theorems \ref{tam} and \ref{tim}. 
\qed
\end{proof}


\section*{Bibliography}
\begin{biblist}
\bib{avi2}{article}{
  author={Avigad, Jeremy},
  author={Feferman, Solomon},
  title={G\"odel's functional \(``Dialectica''\) interpretation},
  conference={ title={Handbook of proof theory}, },
  book={ series={Stud. Logic Found. Math.}, volume={137}, },
  date={1998},
  pages={337--405},
}

\bib{beren2}{article}{
  author={Baire, Ren\'{e}},
  title={Sur les fonctions de variables r\'eelles},
  journal={Ann. di Mat.},
  date={1899},
  pages={1--123},
  volume={3},
  number={3},
}

\bib{basket2}{article}{
  title={Cousin's lemma in second-order arithmetic},
  author={Barrett, Jordan Mitchell},
  author={Downey, Rodney G.},
  author={Greenberg, Noam},
  year={2021},
  journal={Preprint, arxiv: \url {https://arxiv.org/abs/2105.02975}},
}

\bib{bish1}{book}{
  author={Bishop, Errett},
  title={Foundations of constructive analysis},
  publisher={McGraw-Hill},
  date={1967},
  pages={xiii+370},
}

\bib{bors}{article}{
  author={Bors\'{\i }k, J\'{a}n},
  author={Dobo\v {s}, Jozef},
  title={A note on real cliquish functions},
  journal={Real Anal. Exchange},
  volume={18},
  date={1992/93},
  number={1},
  pages={139--145},
}

\bib{quasibor2}{article}{
  author={Bors\'{\i }k, J\'{a}n},
  title={Sums of quasicontinuous functions defined on pseudometrizable spaces},
  journal={Real Anal. Exchange},
  volume={22},
  date={1996/97},
  number={1},
  pages={328--337},
}

\bib{boekskeopendoen}{book}{
  author={Buchholz, Wilfried},
  author={Feferman, Solomon},
  author={Pohlers, Wolfram},
  author={Sieg, Wilfried},
  title={Iterated inductive definitions and subsystems of analysis},
  series={LNM 897},
  publisher={Springer},
  date={1981},
  pages={v+383},
}

\bib{dobo}{article}{
  author={Dobo\v {s}, Jozef},
  author={\v {S}al\'{a}t, Tibor},
  title={Cliquish functions, Riemann integrable functions and quasi-uniform convergence},
  journal={Acta Math. Univ. Comenian.},
  volume={40/41},
  date={1982},
  pages={219--223},
}

\bib{hankelwoot}{book}{
  author={Hankel, Hermann},
  title={{Untersuchungen \"uber die unendlich oft oscillirenden und unstetigen Functionen.}},
  pages={pp.\ 51},
  year={1870},
  publisher={Ludwig Friedrich Fues},
}

\bib{hillebilly2}{book}{
  author={Hilbert, David},
  author={Bernays, Paul},
  title={Grundlagen der Mathematik. II},
  series={Zweite Auflage. Die Grundlehren der mathematischen Wissenschaften, Band 50},
  publisher={Springer},
  date={1970},
}

\bib{holausco}{book}{
  author={Hol\'{a}, \v {L}ubica},
  author={Hol\'{y}, Du\v {s}an},
  author={Moors, Warren},
  title={USCO and quasicontinuous mappings},
  volume={81},
  publisher={De Gruyter},
  date={2021},
  pages={viii+295},
}

\bib{holaseg}{article}{
  author={Hol\'{a}, \v {L}ubica},
  title={There are $2^{\germ {c}}$ quasicontinuous non Borel functions on uncountable Polish space},
  journal={Results Math.},
  volume={76},
  date={2021},
  number={3},
  pages={Paper No. 126, 11},
}

\bib{kleeneS1S9}{article}{
  author={Kleene, Stephen C.},
  title={Recursive functionals and quantifiers of finite types. I},
  journal={Trans. Amer. Math. Soc.},
  volume={91},
  date={1959},
  pages={1--52},
}

\bib{kohlenbach4}{article}{
  author={Kohlenbach, Ulrich},
  title={Foundational and mathematical uses of higher types},
  conference={ title={Reflections on the foundations of mathematics}, },
  book={ series={Lect. Notes Log.}, volume={15}, publisher={ASL}, },
  date={2002},
  pages={92--116},
}

\bib{kohlenbach2}{article}{
  author={Kohlenbach, Ulrich},
  title={Higher order reverse mathematics},
  conference={ title={Reverse mathematics 2001}, },
  book={ series={Lect. Notes Log.}, volume={21}, publisher={ASL}, },
  date={2005},
  pages={281--295},
}

\bib{kosten}{article}{
  author={Kostyrko, Pavel},
  title={Some properties of oscillation},
  journal={Mathematica Slovaca},
  volume={30},
  pages={157--162},
  year={1980},
}

\bib{longmann}{book}{
  author={Longley, John},
  author={Normann, Dag},
  title={Higher-order Computability},
  year={2015},
  publisher={Springer},
  series={Theory and Applications of Computability},
}

\bib{malin}{article}{
  author={Maliszewski, A.},
  title={On the products of bounded Darboux Baire one functions},
  journal={J. Appl. Anal.},
  volume={5},
  date={1999},
  number={2},
  pages={171--185},
}

\bib{dagsamVII}{article}{
  author={Normann, Dag},
  author={Sanders, Sam},
  title={Open sets in Reverse Mathematics and Computability Theory},
  journal={Journal of Logic and Computation},
  volume={30},
  number={8},
  date={2020},
  pages={pp.\ 40},
}

\bib{dagsamV}{article}{
  author={Normann, Dag},
  author={Sanders, Sam},
  title={Pincherle's theorem in reverse mathematics and computability theory},
  journal={Ann. Pure Appl. Logic},
  volume={171},
  date={2020},
  number={5},
  pages={102788, 41},
}

\bib{dagsamIX}{article}{
  author={Normann, Dag},
  author={Sanders, Sam},
  title={The Axiom of Choice in Computability Theory and Reverse Mathematics},
  journal={Journal of Logic and Computation},
  volume={31},
  date={2021},
  number={1},
  pages={297-325},
}

\bib{dagsamXI}{article}{
  author={Normann, Dag},
  author={Sanders, Sam},
  title={On robust theorems due to Bolzano, Weierstrass, and Cantor in Reverse Mathematics},
  journal={Journal of Symbolic Logic, doi: 10.1017/jsl.2022.71},
  pages={pp.\ 51},
  date={2022},
}

\bib{dagsamXII}{article}{
  author={Normann, Dag},
  author={Sanders, Sam},
  title={Betwixt Turing and Kleene},
  journal={Lecture Notes in Computer Science 13137, Springer},
  pages={236-252},
  date={2022},
}

\bib{dagsamX}{article}{
  author={Normann, Dag},
  author={Sanders, Sam},
  title={On the uncountability of $\mathbb {R}$},
  journal={Journal of Symbolic Logic},
  pages={1474-1521},
  volume={87},
  number={4},
  date={2022},
}

\bib{dagsamXIII}{article}{
  author={Normann, Dag},
  author={Sanders, Sam},
  title={On the computational properties of basic mathematical notions},
  journal={Journal of Logic and Computation},
  pages={1747-1795},
  volume={32},
  number={8},
  date={2022},
}

\bib{dagsamXIV}{article}{
  author={Normann, Dag},
  author={Sanders, Sam},
  title={The Biggest Five of Reverse Mathematics},
  journal={Submitted, arxiv: \url {https://arxiv.org/abs/2212.00489}},
  pages={pp.\ 39},
  date={2023},
}

\bib{dagtait}{article}{
  author={Normann, Dag},
  author={Tait, William},
  title={On the computability of the fan functional},
  conference={ title={Feferman on foundations}, },
  book={ series={Outst. Contrib. Log.}, volume={13}, publisher={Springer}, },
  date={2017},
  pages={57--69},
}

\bib{rieal}{book}{
  author={Riemann (auth.), Bernhard},
  author={Roger Clive Baker and Charles O.\ Christenson and Henry Orde (trans.)},
  title={Bernhard Riemann: collected works},
  publisher={Kendrick Press},
  year={2004},
  pages={555},
}

\bib{simpson2}{book}{
  author={Simpson, Stephen G.},
  title={Subsystems of second order arithmetic},
  series={Perspectives in Logic},
  edition={2},
  publisher={CUP},
  date={2009},
  pages={xvi+444},
}

\bib{zweer}{book}{
  author={Soare, Robert I.},
  title={Recursively enumerable sets and degrees},
  series={Perspectives in Mathematical Logic},
  publisher={Springer},
  date={1987},
  pages={xviii+437},
}

\bib{tur37}{article}{
  author={Turing, Alan},
  title={On computable numbers, with an application to the Entscheidungs-problem},
  year={1936},
  journal={Proceedings of the London Mathematical Society},
  volume={42},
  pages={230-265},
}

\end{biblist}

\bye